\newcommand{\tvs}{t.v.s.{}}
\newcommand{\lcs}{l.c.s.{}}
\theoremstyle{plain}
\newtheorem{theorem}{Theorem}
\newtheorem{proposition}{Proposition}
\begin{document}

\title[Characterization of barreled Pt\'{a}k spaces]{On a characterization of spaces satisfying open mapping and equivalent theorems}
\author{Henning Wunderlich}
\address{Dr. Henning Wunderlich, Frankfurt, Germany}
\email{HenningWunderlich@t-online.de}
\date{23.05.2019}
\subjclass[2010]{46A03, 46A08, 46A30.}
\keywords{Functional analysis, topological vector space, locally-convex space, barreled space, Pt\'{a}k space, open mapping, continuous inverse, closed graph.}
\begin{abstract}
For classes of topological vector spaces, we analyze under which conditions open-mapping, continuous-inverse, and closed-graph properties are equivalent. Here, closure under quotients with closed subspaces and closure under closed graphs are sufficient.

We show that the class of barreled Pt\'{a}k spaces is exactly the largest class of locally-convex topological vector spaces, which contains all Banach spaces, is closed under quotients with closed subspaces, is closed under closed graphs, is closed under continuous images, and for which an open-mapping theorem, a continous-inverse theorem, and a closed-graph theorem holds.

An analogous, weaker result also holds for the strictly larger class of barreled infra-Pt\'{a}k spaces.  
\end{abstract}
\maketitle

\section{Introduction}
This is a short paper in the field of topological vector spaces (\tvs), concerned with theorems on open mappings, continuous inverses, and closed graphs. These theorems have a long history, with many applications in different branches of functional analysis \cite{Werner:Funktionalanalysis,Mathieu:Funktionalanalysis,Alt:LineareFunktionalanalysis,AppellVaeth:Funktionalanalysis}. Initially only formulated for Banach spaces, one line of research was to extend these theorems to very general classes of spaces \cite{Ptak1958:Completeness,Ptak1959,Ptak1960,Ptak1962,Ptak1965,Ptak1966,Ptak1969,Ptak1974,HusainMahowald:BarreledSpacesOpenMapping,Husain:SSpacesOpenMapping,Husain1964:I,Husain1964:II,Krishnasamy:PhDThesis,Schaefer:TVS,Valdivia1977/78,Adasch1983,Adasch1986,Simons1989,Rodrigues1991}. While this research states such theorems for linear mappings $u \colon E \to F$ with $E$ taken from one class $\mathcal{A}$ of \tvs\,and $F$ taken from a possibly different class $\mathcal{B}$, we approach the topic differently. We only allow $E$ and $F$ to come from the \emph{very same} class of \tvs\,$\mathcal{C}$, and we ask, under which conditions on $\mathcal{C}$ the open-mapping theorem, the continuous-inverse theorem, and the closed-graph theorem are actually equivalent and hold. For the equivalence of these theorems for a class $\mathcal{C}$, the crucial insight is that $\mathcal{C}$ needs closure properties weaker than expected. Besides closure under quotients with closed subspaces, additionally, only closure under closed graphs is needed, not closure under closed finite products or closed subspaces. This insight leads to a characterization result, showing that the class of barreled Pt\'{a}k spaces is the natural habitat of these theorems, and that at least for locally-convex spaces, the barrier of being barreled and Pt\'{a}k cannot be overcome without losing important closure properties. As research in the 1960s considered Pt\'{a}k and barreled spaces already, this paper thus may explain, why research on these topics faded out in the 1970s.  

\section{Equivalences}
In this work, we use notation, definitions, and results from the excellent textbook of Schaefer \cite{Schaefer:TVS}. Throughout, w.l.o.g.\,we assume that all topological vector spaces (\tvs) are $T_0$. Hence, they are fully regular. In particular, they are $T_2$. \footnote{For the notions of $T_0$, $T_2$, and fully regular, and for the properties of \tvs being uniform and in case of $T_0$ being fully regular, we refer the reader to any topology textbook, e.g. \cite{Querenburg:MengenTopologie}.} Recall that a map $u \colon E \to F$ is called \emph{closed}, if the set $\mathrm{Graph}(u) = \{ (e, u(e)) \mid e \in E \}$ is a closed subset of $E \times F$. Map $u$ is called \emph{open}, if for every open set $O \subseteq E$ the image $u(O)$ is open in $u(E)$. 

We define three properties for a class $\mathcal{C}$ of \tvs.
\begin{description}
\item[\emph{(O) Open-mapping property}] For every pair of \tvs\,$E$ and $F$ in $\mathcal{C}$ it holds that every surjective, linear, continuous map $u \colon E \to F$ is open.
\item[\emph{(C) Continuous-inverse property}] For every pair of \tvs\,$E$ and $F$ in $\mathcal{C}$ it holds that every bijective, linear map $u \colon E \to F$ is continuous iff its inverse $u^{-1}$ is continuous.
\item[\emph{(G) Closed-graph property}] For every pair of \tvs\,$E$ and $F$ in $\mathcal{C}$ it holds that every linear map $u \colon E \to F$ is closed iff it is continuous.
\end{description}

We say that a class $\mathcal{C}$ of \tvs\,is \emph{closed under closed graphs}, if for every $E$ and $F$ in $\mathcal{C}$ and every linear, closed map $u \colon E \to F$ its graph $\mathrm{Graph}(u)$ is in $\mathcal{C}$. A class $\mathcal{C}$ of \tvs\,is \emph{closed under quotients with closed subspaces}, if for every $E$ in $\mathcal{C}$ and $A$ a closed subspace of $E$, the quotient space $E/A$ is in $\mathcal{C}$. Furthermore, we say that a class $\mathcal{C}$ of \tvs\,has the \emph{OCG-equivalence} property, if it is closed under quotients with closed subspaces, and if it is closed under closed graphs. 

\begin{theorem}\label{Thm:OCG}
Let $\mathcal{C}$ be a class of ($T_0$) \tvs\,satisfying the OCG-equivalence property. Then properties (O), (C), and (G) are equivalent for $\mathcal{C}$.
\end{theorem}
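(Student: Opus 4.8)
The plan is to prove the cycle of implications (O) $\Rightarrow$ (C) $\Rightarrow$ (G) $\Rightarrow$ (O); the first link is essentially formal, the second consumes closure under closed graphs, and the third consumes closure under quotients with closed subspaces. Two elementary facts drive all three steps, both valid because every space in $\mathcal{C}$ is $T_2$: first, a continuous linear map into a member of $\mathcal{C}$ has closed graph; second, the coordinate flip $(x,y)\mapsto(y,x)$ is a homeomorphism carrying $\mathrm{Graph}(v)$ onto $\mathrm{Graph}(v^{-1})$ for a bijective linear $v$, so that $v$ is closed iff $v^{-1}$ is closed. The conceptual point — and the reason the stated hypotheses are enough — is that we never need a product $E\times F$ to lie in $\mathcal{C}$; we only ever invoke the graph of a concrete closed map, which closure under closed graphs supplies directly.

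For (O) $\Rightarrow$ (C): given a bijective linear $u\colon E\to F$ with $E,F\in\mathcal{C}$, if $u$ is continuous it is a continuous surjection, hence open by (O), hence $u^{-1}$ is continuous; and if $u^{-1}$ is continuous, the same reasoning applied to the continuous surjection $u^{-1}\colon F\to E$ (again both in $\mathcal{C}$) makes $u^{-1}$ open, i.e.\ $u$ continuous. For (C) $\Rightarrow$ (G): the implication "continuous $\Rightarrow$ closed graph'' is automatic since the target lies in $\mathcal{C}$, so take a closed linear $u\colon E\to F$ with $E,F\in\mathcal{C}$ and set $G:=\mathrm{Graph}(u)$, which lies in $\mathcal{C}$ by closure under closed graphs; the restriction of the first projection $\pi_1\colon G\to E$ is a continuous linear bijection, (C) forces $\pi_1^{-1}\colon e\mapsto(e,u(e))$ to be continuous, and composing with the continuous restriction $\pi_2\colon G\to F$ of the second projection gives $u=\pi_2\circ\pi_1^{-1}$, so $u$ is continuous.

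For (G) $\Rightarrow$ (O): let $u\colon E\to F$ be a continuous linear surjection with $E,F\in\mathcal{C}$. Since $F$ is $T_2$, the kernel $N:=\ker u$ is a closed subspace, so $E/N\in\mathcal{C}$ by closure under quotients with closed subspaces, and $E/N$ is Hausdorff. The induced map $\bar u\colon E/N\to F$ is a continuous linear bijection, its graph is closed (continuous map into the Hausdorff space $F$), hence by the flip fact $\mathrm{Graph}(\bar u^{-1})$ is closed, i.e.\ $\bar u^{-1}\colon F\to E/N$ is a closed linear map between members of $\mathcal{C}$, so (G) makes it continuous. Thus $\bar u$ is a homeomorphism, in particular open, and since the canonical quotient map $\pi\colon E\to E/N$ is open and $u=\bar u\circ\pi$, the map $u$ is open; as $u$ is surjective this is (O).

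I expect no deep obstacle here. The only thing requiring genuine care is the bookkeeping: at each step one must verify that the auxiliary objects ($\ker u$, $E/\ker u$, $\mathrm{Graph}(u)$, and the projections onto them) really belong to $\mathcal{C}$ and are Hausdorff, and one must match each direction of the three biconditionals with the correct transport of closedness along flips and along the projections $\pi_1,\pi_2$. The substance of the theorem lies precisely in recognizing that these two closure properties are exactly what the three reductions consume, and in particular that closure under finite products or closed subspaces is not needed.
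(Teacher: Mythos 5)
Your proposal is correct, and every individual device in it is the same as in the paper's proof: the quotient by the closed kernel $N=u^{-1}(0)$ to reduce openness to a bijective situation, the graph $G=\mathrm{Graph}(u)$ with its two projections $\pi_1,\pi_2$ (using closure under closed graphs to place $G$ in $\mathcal{C}$), the coordinate flip to transport closedness between $\mathrm{Graph}(v)$ and $\mathrm{Graph}(v^{-1})$, and the folklore fact that a continuous map into a Hausdorff space has closed graph. The only difference is organizational: the paper establishes the two biconditionals (O)$\Leftrightarrow$(C) and (C)$\Leftrightarrow$(G) via four separate implications, whereas you close a single cycle (O)$\Rightarrow$(C)$\Rightarrow$(G)$\Rightarrow$(O); your final step fuses the paper's (G)$\Rightarrow$(C) argument (the flip) with its (C)$\Rightarrow$(O) argument (the quotient) into one implication, applying (G) directly to $\bar u^{-1}\colon F\to E/N$, both of whose domain and codomain you correctly verify to lie in $\mathcal{C}$. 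This buys a marginally shorter proof (three implications instead of four) at no cost: the two closure hypotheses are consumed at exactly the same points, and your version makes equally visible that neither closure under finite products nor under closed subspaces is needed.
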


The following arguments in the proof of the above theorem are well-known and thus not new. Presenting them needs justification. We give three reasons: (1) emphasis on where exactly the closure-properties of the class $\mathcal{C}$ are needed, (2) first-time crystal-clear presentation of these equivalences in this general setting, not found in textbooks in functional analysis, and (3) for the sake of completeness.

\begin{proof}
\emph{(O) implies (C):} Let $E$ and $F$ be \tvs\,in $\mathcal{C}$, and let $u \colon E \to F$ be bijective, linear, and continuous. By (O), $u$ is open. Hence, $u^{-1}$ is continuous. Analogously, argue for $u^{-1}$.

\emph{(C) implies (O):} Let $E$ and $F$ be \tvs\,in $\mathcal{C}$, and let $u \colon E \to F$ be surjective, linear, and continuous. Subspace $N = u^{-1}(0)$ is closed by continuity of $u$. As $\mathcal{C}$ is closed by quotients with closed subspaces, $E/N$ is in $\mathcal{C}$. The induced map $u_0 \colon E/N \to F$ is bijective and continuous. By (C), $u_0^{-1}$ is continuous. Hence, $u_0$ is open. Then finally, the map $u = p \circ u_0$ is open as composition of open maps, where $p \colon E \to E/N$ denotes the linear, continuous, and open projection.

\emph{(C) implies (G):} Let $E$ and $F$ be \tvs\,in $\mathcal{C}$, and let $u \colon E \to F$ be linear. Define the bijective, linear map $v \colon E \to \mathrm{Graph}(u)$ by $v(e) = (e, u(e))$. Let $p_E$ and $p_F$ denote the linear, continuous projections from $E \times F$, respectively. If $u$ is continuous, then by Prop.\,\ref{Prop:ContinuousClosed} below, $\mathrm{Graph}(u)$ is closed. And if $\mathrm{Graph}(u)$ is closed, then it is in $\mathcal{C}$ by closure under closed graphs. As $v^{-1} = p_E \colon \mathrm{Graph}(u) \to E$ is bijective, linear, and continuous, the map $v$ is continuous by application of (C).

\emph{(G) implies (C):} Let $E$ and $F$ be \tvs\,in $\mathcal{C}$. Define $s \colon E \times F \to F \times E$ by $s(x, y) = (y, x)$. Clearly, $s$ is a topological isomorphism. Let $u \colon E \to F$ be bijective and linear. By (G), the map $u$ is continuous iff $\mathrm{Graph}(u)$ is closed. This holds iff $\mathrm{Graph}(u^{-1}) = s(\mathrm{Graph}(u))$ is closed. Again by (G), the former holds iff $u^{-1}$ is continuous.
\end{proof}

As we could only find proofs of the following proposition in the context of Banach spaces, we give a proof in full generality for the sake of completeness.

\begin{proposition}[Folklore]\label{Prop:ContinuousClosed}
If a map between topological $T_2$ spaces is continuous, then it is closed.
\end{proposition}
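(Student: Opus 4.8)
The plan is to prove that $\mathrm{Graph}(f)$ is a closed subset of the product $X \times Y$ for any continuous map $f \colon X \to Y$ between $T_2$ spaces, and in fact to observe that only the codomain $Y$ needs to be Hausdorff. I would take the slick route: exhibit $\mathrm{Graph}(f)$ as the preimage of a closed set under a continuous map.

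First I would recall the standard characterization that a space $Y$ is $T_2$ if and only if its diagonal $\Delta_Y = \{ (y, y) \mid y \in Y \}$ is closed in $Y \times Y$. Next I would introduce the map $g \colon X \times Y \to Y \times Y$ given by $g(x, y) = (f(x), y)$. This $g$ is continuous, since it is the map into a product whose two coordinate components are $f \circ \pi_X$ and $\pi_Y$, where $\pi_X \colon X \times Y \to X$ and $\pi_Y \colon X \times Y \to Y$ are the (continuous) coordinate projections, and $f$ is continuous by hypothesis. A one-line verification then gives $\mathrm{Graph}(f) = g^{-1}(\Delta_Y)$: a pair $(x, y)$ lies in $g^{-1}(\Delta_Y)$ exactly when $f(x) = y$, i.e.\ exactly when $(x, y) \in \mathrm{Graph}(f)$. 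Since $\Delta_Y$ is closed in $Y \times Y$ and $g$ is continuous, the preimage $\mathrm{Graph}(f)$ is closed in $X \times Y$, which is the claim.

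As an alternative, more elementary argument one can show directly that the complement of $\mathrm{Graph}(f)$ is open: given $(x_0, y_0)$ with $y_0 \neq f(x_0)$, use the $T_2$ property of $Y$ to pick disjoint open sets $U \ni f(x_0)$ and $V \ni y_0$; then $f^{-1}(U) \times V$ is, by continuity of $f$, an open neighborhood of $(x_0, y_0)$, and it meets $\mathrm{Graph}(f)$ nowhere, because any point $(x, f(x))$ in it would satisfy $f(x) \in U \cap V = \varnothing$. I would present the diagonal argument as the main proof and mention this second one only in passing.

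There is essentially no obstacle in this proposition; it is genuine folklore. The only points worth flagging for the reader are that no separation hypothesis on the domain $X$ is actually used — $T_2$-ness of $Y$ alone suffices — and that the word ``closed'' here refers to the graph being a closed set, in accordance with the definition fixed earlier in the section, rather than to $f$ mapping closed sets to closed sets.
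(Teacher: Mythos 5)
Your proof is correct, and it takes a genuinely different route from the paper. The paper argues via convergence: it picks a point $(e,f)$ in $\overline{\mathrm{Graph}(u)}$, takes a filter $C$ containing the graph and converging to $(e,f)$, pushes it forward under the continuous map $v(e,g)=(e,u(e))$, refines $C$ and $v(C)$ to a common filter $D$ converging to both $(e,f)$ and $(e,u(e))$, and invokes uniqueness of limits in the Hausdorff product to conclude $(e,f)=(e,u(e))\in\mathrm{Graph}(u)$. Your main argument instead writes $\mathrm{Graph}(f)=g^{-1}(\Delta_Y)$ for the continuous map $g(x,y)=(f(x),y)$ and uses the standard equivalence between $Y$ being $T_2$ and $\Delta_Y$ being closed; your alternative argument just exhibits an open box $f^{-1}(U)\times V$ around any point off the graph. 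All three arguments are valid. What yours buys is brevity and the sharper observation that only the codomain needs to be Hausdorff --- the paper's proof, as written, invokes $T_2$-ness of the whole product $E\times F$, though it too really only uses separation in the second coordinate. What the paper's filter argument buys is stylistic consistency with the uniform-space and filter machinery used throughout the theory of topological vector spaces; it is, however, noticeably more roundabout for this particular folklore fact. Your closing remarks on the meaning of ``closed'' (closed graph, not closed images of closed sets) correctly match the convention the paper fixes at the start of Section 2.
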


\begin{proof}
Let $E$ and $F$ be topological $T_2$ spaces, and let $u \colon E \to F$ be a continuous map. Define map $v \colon E \times F \to E \times F$ by $v(e, g) = (e, u(e))$. Then $v$ is continuous and $v(E \times F) = \mathrm{Graph}(u)$. Consider an arbitrary point $(e, f)$ in the closure $\overline{\mathrm{Graph}(u)}$. Then there exists a filter $C$ containing $\mathrm{Graph}(u)$ and converging to $(e, f)$. By continuity of $v$, the image filter $v(C)$ converges to $v(e, f) = (e, u(e))$. As $E \times F$ is in $C$, we have $\mathrm{Graph}(u)$ in $v(C)$. The set of intersections of sets from $C$ and $v(C)$ (i.e., $C \cap v(C) = \{ A \cap B \mid A \in C, B \in v(C)\}$) constitutes a filter base for a finer filter $D \supseteq C, v(C)$. Filter $D$ contains $\mathrm{Graph}(u)$ and converges both to $(e, f)$ and $(e, u(e))$, respectively. As $E \times F$ is $T_2$ as the product of two $T_2$ spaces, we have the uniqueness of the limit $(e, f) = (e, u(e))$. Hence, $(e, f)$ is in $\mathrm{Graph}(u)$, showing closedness of $\mathrm{Graph}(u)$.
\end{proof}

Note that a class $\mathcal{C}$ of \tvs\,is closed under closed graphs, if it is closed under finite products (i.e., with $E$ and $F$ in $\mathcal{C}$, we have $E \times F$ in $\mathcal{C}$) and closed under closed subspaces (i.e., with $E$ in $\mathcal{C}$, every closed subspace of $E$ is in $\mathcal{C}$). Main insight of above theorem is that the weaker property of closure under closed graphs suffices. Closure under finite products or closure under closed subspaces is not necessary.

It is well-known that the classes (all assumed $T_0$) of complete locally-convex spaces (\lcs), complete metrizable \tvs\,(Fr\'{e}chet), Banach spaces, and nuclear spaces all satisfy the OCG-equivalence property.\footnote{For complete \lcs: finite products, \cite[II.5.2]{Schaefer:TVS}; closed subspaces, \cite[I.2.1, II.6.1]{Schaefer:TVS}; quotients under closed subspaces, \cite[I.2.3, II.6.1]{Schaefer:TVS}. For complete metrizable spaces: finite products, \cite[I.2, Ex.\,1(b)]{Schaefer:TVS},  \cite[Ch.\,II, \S3.5, \S3.9]{Bourbaki:GeneralTop:1-4}; closed subspaces, \cite[I.2.1]{Schaefer:TVS}, \cite[Ch.\,II, \S3.4, \S3.9]{Bourbaki:GeneralTop:1-4}; quotients under closed subspaces, \cite[I.2.3, I.6.3]{Schaefer:TVS}. For Banach spaces: finite products, \cite[II.2.2]{Schaefer:TVS}, \cite[Ch.\,IX, \S3.4]{Bourbaki:GeneralTop:5-10}; closed subspaces, trivial; quotients under closed subspaces, \cite[I.2.3, II.2.3]{Schaefer:TVS}, \cite[Ch.\,IX, \S3.4]{Bourbaki:GeneralTop:5-10}. For nuclear spaces, see \cite[III.7.4]{Schaefer:TVS}}

In contrast, it is unclear if subclasses of barreled spaces, Pt\'{a}k spaces, or Baire spaces satisfy the property of OCG-equivalence, because in general, barreled spaces and Baire spaces are not closed under closed subspaces, and Pt\'{a}k spaces are not closed under finite products. At least, barreled spaces are closed under finite products and quotients with closed subspaces, see \cite[II.7.1 comment and Cor.\,1]{Schaefer:TVS}, and Pt\'{a}k spaces are closed under closed subspaces and quotients with closed subspaces, respectively, see \cite[IV.8.2, IV.8.3 Cor.\,3]{Schaefer:TVS}.

\section{Characterization}
For the notions of \lcs, barreled space, Pt\'{a}k space, and infra-Pt\'{a}k space, we refer the reader to \cite[II.4, II.7, IV.8]{Schaefer:TVS}, respectively. For more information on barreledness and related properties, see also \cite{ADASCH1970:Tonneliert,AIF_1971__21_2_3_0,AIF_1971__21_2_1_0,AIF_1972__22_2_27_0,AIF_1972__22_2_21_0,CM_1972__24_2_227_0,Valdivia1973,AIF_1979__29_3_39_0,Valdivia1981,Saxon1974,Hollstein1977,PerezCarreras1987}.

Recall that a linear map $e \colon E \to F$ is called \emph{nearly-open}, if for each $0$-neighbor\-hood $U \subseteq E$, $u(U)$ is dense in some $0$-neighborhood in $u(E)$.  

We say that a class $\mathcal{C}$ of \tvs\,is \emph{closed under continuous images}, if for every $E$ in $\mathcal{C}$, every \lcs\,$F$, and every injective, linear, continuous, and nearly-open map $u \colon E \to F$, its image $u(E)$ is in $\mathcal{C}$.

\begin{proposition}\label{Prop:ContinuousImages}
The classes of Banach spaces, barreled Pt\'{a}k spaces, and barreled infra-Pt\'{a}k spaces are closed under continuous images.
\end{proposition}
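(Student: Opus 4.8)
The plan is to reduce everything to the corestriction of $u$ and then invoke the defining property of infra-Pt\'{a}k spaces. Let $\mathcal{C}$ be one of the three classes, let $E \in \mathcal{C}$, let $F$ be an \lcs, and let $u \colon E \to F$ be injective, linear, continuous, and nearly-open. Put $G = u(E)$, carrying the subspace topology from $F$; then $G$ is an \lcs, and the corestriction $\hat{u} \colon E \to G$, $\hat{u}(e) = u(e)$, is a continuous linear bijection. Since the definition of near-openness for $u$ is already phrased in terms of $u(E) = G$, the map $\hat{u}$ is again nearly-open. The first point to record is that in each of the three cases $E$ is, in particular, an infra-Pt\'{a}k space: barreled Pt\'{a}k spaces are infra-Pt\'{a}k by definition (every continuous nearly-open linear surjection is open, a fortiori every continuous nearly-open linear bijection is a topological isomorphism), and a Banach space is a Fr\'{e}chet space, hence a Pt\'{a}k space (via the Krein--\v{S}mulian theorem, see \cite[IV.6, IV.8]{Schaefer:TVS}), hence infra-Pt\'{a}k; Banach spaces are moreover barreled.

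Next I would apply the characterization of infra-Pt\'{a}k spaces, see \cite[IV.8]{Schaefer:TVS}: a continuous, linear, nearly-open bijection from an infra-Pt\'{a}k space onto an arbitrary \lcs\ is a topological isomorphism. Applied with the target \lcs\ taken to be $G$ itself, this shows that $\hat{u} \colon E \to G$ is a topological isomorphism; in particular $u(E) = G$ is topologically isomorphic to $E$. Note that barreledness plays no role in producing this isomorphism; it is only inherited by $u(E)$ afterwards.

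It then remains to transfer membership in $\mathcal{C}$ along the topological isomorphism $\hat{u}$. Barreledness, the Pt\'{a}k property, and the infra-Pt\'{a}k property are intrinsic properties of a \tvs\ and hence invariant under topological isomorphism; indeed, such an isomorphism induces an isomorphism of the duals carrying equicontinuous sets onto equicontinuous sets and the weak-$\ast$ topology onto the weak-$\ast$ topology. Moreover, a \tvs\ topologically isomorphic to a Banach space is itself Banach: transport the norm along the isomorphism, and recall that a topological isomorphism of \tvs\ is a uniform isomorphism, so completeness is preserved. Hence $u(E)$ is a Banach space, a barreled Pt\'{a}k space, or a barreled infra-Pt\'{a}k space, according to which class $E$ was taken from.

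The argument is short; its only genuinely non-formal ingredient is in the Banach case, where one really needs that Banach (indeed Fr\'{e}chet) spaces are Pt\'{a}k, for otherwise there would be no reason that $u(E)$ --- a priori merely a subspace of an arbitrary \lcs\ $F$, not evidently metrizable --- is normable. The remaining verifications (that $\hat{u}$ is an \lcs\ map still satisfying near-openness, and that the three class-properties are isomorphism-invariant) are routine, so I do not anticipate any real obstacle beyond correctly locating the infra-Pt\'{a}k characterization and the chain Banach $\subseteq$ Fr\'{e}chet $\subseteq$ Pt\'{a}k in the literature.
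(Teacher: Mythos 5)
Your proof is correct, and its core is the same as the paper's: both arguments hinge on the characterization of (infra-)Pt\'{a}k spaces in \cite[IV.8.3, Thm.]{Schaefer:TVS} to conclude that the corestriction $\hat{u}\colon E \to u(E)$ is a topological isomorphism, and both handle the Banach case via the chain Banach $\subseteq$ Fr\'{e}chet $\subseteq$ Pt\'{a}k (Krein--\v{S}mulian). The one genuine divergence is the transfer of barreledness. You simply observe that barreledness is a linear-topological invariant and carry it across $\hat{u}$; the paper instead re-proves barreledness of $u(E)$ from scratch by taking an arbitrary closed linear map $v\colon u(E)\to B$ into a Banach space, pulling it back to $v\circ u\colon E\to B$, invoking the Robertson--Robertson closed-graph theorem to get continuity, and then applying Mahowald's criterion \cite[IV.8.6]{Schaefer:TVS} to conclude that $u(E)$ is barreled. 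Your route is shorter and entirely legitimate --- barrels, neighborhoods of $0$, and hence the barreledness property are manifestly preserved by topological isomorphisms of \tvs\ (and the paper itself already uses isomorphism-invariance for the Pt\'{a}k and Banach properties) --- so the paper's detour buys nothing beyond illustrating the Mahowald characterization that is used again later in the characterization theorems. The only cosmetic quibble is your parenthetical gloss that a barreled Pt\'{a}k space is infra-Pt\'{a}k ``by definition'': in Schaefer's setup this is via the dual characterization or via \cite[IV.8.3]{Schaefer:TVS} (surjections versus injections), not the literal definition, but the implication is of course standard and nothing in your argument depends on the phrasing.
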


\begin{proof}
Let $F$ be an arbitrary \lcs, and let $u \colon E \to F$ be an arbitrary injective, linear, continuous, and nearly-open map. Space $u(E)$ is \lcs\,as a subspace of $F$.

If $E$ is an (infra-)Pt\'{a}k space, then map $u$ is a topological homomorphism by \cite[IV.8.3, Thm.]{Schaefer:TVS}. Hence, $u(E)$ is isomorphic to $E$ and thus an (infra-)Pt\'{a}k space.

If $E$ is a Banach space, then it is a Fr\'echet space, and thus a Pt\'{a}k space by the theorem of Krein-\u{S}mulian, see \cite[IV.6.4, Thm.]{Schaefer:TVS}. By the above argument, $u(E)$ is isomorphic to $E$ and thus a Banach space.

We show that $u(E)$ is barreled, if $E$ is a barreled (infra)-Pt\'{a}k space. By \cite[IV.8.3, Thm.]{Schaefer:TVS}, map $u \colon E \to u(E)$ is an isomorphism. Let $B$ be an arbitrary Banach space, and let $v \colon u(E) \to B$ be an arbitrary linear and closed map. Then the composition map $v \circ u \colon E \to B$ is linear and closed, the latter because map $(u, \mathrm{id}) \colon E \times B \to u(E) \times B$ is an isomorphism with $(u, \mathrm{id})\left(\mathrm{Graph}(v \circ u)\right) = \mathrm{Graph}(v)$. As $E$ is barreled, $B$ is $B_r$-complete, and $v \circ u$ is closed, map $v \circ u$ is continuous by the Thm.\,of Robertson-Robertson, \cite[IV.8.5, Thm.]{Schaefer:TVS}. Hence, $v = (v \circ u) \circ u^{-1}$ is continuous. Finally, space $u(E)$ is barreled by the Thm.\,of Mahowald, \cite[IV.8.6]{Schaefer:TVS}.  
\end{proof}

\begin{proposition}\label{Prop:ClosedGraphs}
The classes of Banach spaces, barreled Pt\'{a}k spaces, and barreled infra-Pt\'{a}k spaces are closed under closed graphs.
\end{proposition}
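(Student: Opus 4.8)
The plan is to show that if $E$ and $F$ are in one of the three classes (Banach, barreled Pt\'{a}k, barreled infra-Pt\'{a}k) and $u \colon E \to F$ is linear and closed, then $\mathrm{Graph}(u)$, as a subspace of $E \times F$, again belongs to the class. The natural route is \emph{not} to invoke closure under closed subspaces of $E \times F$ directly (Pt\'{a}k spaces fail closure under products), but instead to realize $\mathrm{Graph}(u)$ as a continuous image in the sense already introduced, and then apply Proposition~\ref{Prop:ContinuousImages}. So first I would set up the candidate map exactly as in the proof of ``(C) implies (G)'' in Theorem~\ref{Thm:OCG}: let $v \colon E \to \mathrm{Graph}(u)$ be the bijective linear map $v(e) = (e, u(e))$, with inverse $v^{-1} = p_E$, the restriction of the first projection, which is clearly linear, continuous, and bijective.

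Next I would verify the three hypotheses needed to apply Proposition~\ref{Prop:ContinuousImages} to $v$ (or rather, since that proposition is phrased in terms of a map \emph{out of} a space in the class, I would apply it with source $E$ and a suitable target). The point is that $v$ need not be continuous — that is exactly the content of the closed-graph phenomenon — so I cannot use $v$ itself as the ``continuous image'' map. Instead I would use $p_E \colon \mathrm{Graph}(u) \to E$: but this goes the wrong way. The cleaner approach: apply Proposition~\ref{Prop:ContinuousImages} with the roles reversed by observing that the relevant hypotheses are symmetric enough. Concretely, I would argue that $p_F \colon \mathrm{Graph}(u) \to F$ restricted, together with the ambient structure, exhibits $\mathrm{Graph}(u)$ appropriately; alternatively, and more robustly, I would show directly that $\mathrm{Graph}(u)$ inherits the relevant property. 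For Banach: $\mathrm{Graph}(u)$ is a closed subspace of the Banach space $E \times F$, hence Banach — this case is immediate and needs no machinery. For (infra-)Pt\'{a}k: $\mathrm{Graph}(u)$ is a closed subspace of the (infra-)Pt\'{a}k space $E \times F$? No — products of Pt\'{a}k spaces need not be Pt\'{a}k. So here I genuinely need the homomorphism theorem: I would show $p_E \colon \mathrm{Graph}(u) \to E$ is injective, linear, continuous, and \emph{nearly-open} (the last because $u$ closed forces the graph topology to be ``not too fine''), so that by \cite[IV.8.3, Thm.]{Schaefer:TVS} applied with $E$ the (infra-)Pt\'{a}k target, $p_E$ is a topological homomorphism, whence $\mathrm{Graph}(u) \cong p_E(\mathrm{Graph}(u)) = E$; but wait, the homomorphism theorem requires the \emph{source} to be (infra-)Pt\'{a}k, and our source is the unknown $\mathrm{Graph}(u)$. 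The correct move is therefore to instead use Proposition~\ref{Prop:ContinuousImages} directly: the map $v \colon E \to \mathrm{Graph}(u) \hookrightarrow E \times F$ is injective, linear; it is continuous into $E \times F$ iff $u$ is continuous, which we do not know — so this also fails.

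Given these circularities, the approach I would actually commit to is: combine Proposition~\ref{Prop:ContinuousImages} with Theorem~\ref{Thm:OCG}. Namely, each of the three classes has the OCG-equivalence property \emph{provided} it is closed under closed graphs and under quotients with closed subspaces; and Proposition~\ref{Prop:ContinuousImages} already gives closure under continuous images. But more to the point, I would prove closure under closed graphs by a \emph{bootstrapping} argument: since $p_E \colon \mathrm{Graph}(u) \to E$ is bijective, continuous, and (because $u$ is closed, $\mathrm{Graph}(u)$ is closed in $E \times F$, hence complete when $E,F$ are) $p_E$ is \emph{nearly-open} — this is the technical heart, and is where I expect the main obstacle. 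The near-openness of $p_E$ follows from the closedness of the graph together with the barreledness/completeness on the $E$-side via a Krein–\v{S}mulian- or Banach–Steinhaus-type density argument. Once $p_E$ is nearly open, I apply \cite[IV.8.3, Thm.]{Schaefer:TVS} with $E$ as the (infra-)Pt\'{a}k space to conclude $p_E$ is a topological isomorphism, so $\mathrm{Graph}(u) \cong E$ and inherits all the defining properties; for the Banach case the closed-subspace argument above suffices directly. The main obstacle, then, is establishing near-openness of $p_E$ without assuming anything about $\mathrm{Graph}(u)$ beyond it being a closed subspace of $E \times F$ — I would handle it by noting that for a $0$-neighborhood $W = (U \times V) \cap \mathrm{Graph}(u)$, its image $p_E(W) = U \cap u^{-1}(V)$, and density of this in a $0$-neighborhood of $E$ is exactly a barreledness/completeness consequence once one checks the graph is closed.
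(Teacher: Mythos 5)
Your Banach case is fine and matches the paper (a closed subspace of the Banach space $E \times F$ is Banach). For the barreled (infra-)Pt\'{a}k case, however, your committed argument has a genuine gap at its final step: you propose to show that $p_E \colon \mathrm{Graph}(u) \to E$ is continuous, bijective and nearly open, and then to apply the homomorphism theorem \cite[IV.8.3, Thm.]{Schaefer:TVS} ``with $E$ as the (infra-)Pt\'{a}k space''. That theorem places the (infra-)Pt\'{a}k hypothesis on the \emph{domain} of the map, and here the domain is $\mathrm{Graph}(u)$ --- precisely the space whose membership in the class you are trying to establish. You correctly flag this circularity earlier in your proposal and then commit to essentially the same move anyway; no amount of near-openness of $p_E$ rescues it, because every result in this circle that upgrades a nearly open map to a topological homomorphism (IV.8.3, IV.8.4) requires the completeness-type hypothesis on the source, not the target.

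The missing idea is to exploit the hypotheses on \emph{both} spaces simultaneously before ever looking at the graph as a space: since $E$ is barreled and $F$ is an infra-Pt\'{a}k (i.e.\ $B_r$-complete) space, the Robertson--Robertson closed-graph theorem \cite[IV.8.5, Thm.]{Schaefer:TVS} applies directly to the closed linear map $u$ and yields that $u$ is \emph{continuous}. This dissolves all the circularities you ran into: $v(e) = (e, u(e))$ is then a continuous bijection $E \to \mathrm{Graph}(u)$ whose inverse $p_E$ is continuous, so $v$ is open (hence nearly open), and $\mathrm{Graph}(u)$ is exhibited as the injective, linear, continuous, nearly open image of the barreled (infra-)Pt\'{a}k space $E$; Proposition~\ref{Prop:ContinuousImages} then places $\mathrm{Graph}(u)$ in the class (equivalently, $v$ is a topological isomorphism and the graph inherits everything from $E$). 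Your instinct that the graph should turn out isomorphic to $E$ is right, but the route to it is the closed-graph theorem applied to $u$ itself, not a near-openness argument for $p_E$.
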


\begin{proof}
The statement holds for Banach spaces, because Banach spaces are closed under finite products and closed subspaces.

Let $E$ and $F$ be arbitrary barreled (infra-)Pt\'{a}k spaces, and let $u \colon E \to F$ be an arbitrary linear and closed map. By the theorem of Robertson-Robertson, \cite[IV.8.5, Thm.]{Schaefer:TVS}, u is continuous. Note that the space $\mathrm{Graph}(u)$ is an \lcs\,as a closed subset of \lcs\,$E \times F$. Define the bijective and continuous map $v \colon E \to \mathrm{Graph}(u)$ by $v(e) = (e, u(e))$. The map $v$ is open and thus nearly-open, because its inverse $v^{-1} = p_E \colon \mathrm{Graph}(u) \to E$ is continuous. Now, $\mathrm{Graph}(u)$ is the continuous image of the barreled (infra-)Pt\'{a}k space $E$. The statement then follows from Prop.\,\ref{Prop:ContinuousImages}.
\end{proof}

\begin{theorem}[Barreled Pt\'{a}k Characterization]
The class of barreled Pt\'{a}k spaces is exactly the largest ($T_0$) class of \lcs, which contains all Banach spaces, is closed under quotients with closed subspaces, is closed under closed graphs, is closed under continuous images, and for which an open-mapping theorem (O), a continuous-inverse theorem (C), or a closed-graph theorem (G) holds (and thus all of them).
\end{theorem}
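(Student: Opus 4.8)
The plan is to prove the two inclusions ``$\subseteq$'' and ``$\supseteq$'' of the asserted extremal property separately. For ``$\supseteq$'', i.e.\ that the class of barreled Pt\'{a}k spaces itself has all the listed properties, I would check them one by one. It contains every Banach space, since a Banach space is Fr\'{e}chet, hence Pt\'{a}k by the theorem of Krein-\u{S}mulian (as in the proof of Prop.\,\ref{Prop:ContinuousImages}), and barreled since it is a Baire space. It is closed under quotients with closed subspaces, by combining the corresponding fact for barreled spaces (\cite[II.7.1 comment and Cor.\,1]{Schaefer:TVS}) with the one for Pt\'{a}k spaces (\cite[IV.8.2, IV.8.3 Cor.\,3]{Schaefer:TVS}). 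It is closed under closed graphs by Prop.\,\ref{Prop:ClosedGraphs}, and closed under continuous images by Prop.\,\ref{Prop:ContinuousImages}. Being closed under quotients with closed subspaces and under closed graphs, it has the OCG-equivalence property, so by Thm.\,\ref{Thm:OCG} it suffices to verify one of (O), (C), (G); I would verify (G): for barreled Pt\'{a}k spaces $E, F$ and a linear map $u \colon E \to F$, continuity implies closedness by Prop.\,\ref{Prop:ContinuousClosed}, while a closed such $u$ is continuous by the theorem of Robertson-Robertson \cite[IV.8.5, Thm.]{Schaefer:TVS}, since $E$ is barreled and $F$, being Pt\'{a}k, is $B_r$-complete.

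For ``$\subseteq$'', let $\mathcal{C}$ be any ($T_0$) class of \lcs\ that contains all Banach spaces, is closed under quotients with closed subspaces, under closed graphs, and under continuous images, and for which one of (O), (C), (G) holds. Then $\mathcal{C}$ has the OCG-equivalence property, so Thm.\,\ref{Thm:OCG} gives all of (O), (C), (G) for $\mathcal{C}$. Fix $E \in \mathcal{C}$; I claim $E$ is a barreled Pt\'{a}k space. For barreledness: if $B$ is a Banach space and $v \colon E \to B$ is linear and closed, then $B \in \mathcal{C}$ by hypothesis, so $v$ is continuous by (G); hence $E$ is barreled by the theorem of Mahowald \cite[IV.8.6]{Schaefer:TVS}. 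For the Pt\'{a}k property I use the characterization that a \lcs\ is a Pt\'{a}k space iff every continuous, nearly-open, linear map of it into an arbitrary \lcs\ is a topological homomorphism (\cite[IV.8.3]{Schaefer:TVS}). So let $F$ be an \lcs\ and $u \colon E \to F$ continuous, linear and nearly-open. Its kernel $N = u^{-1}(0)$ is closed, hence $E/N \in \mathcal{C}$; write $u = u_0 \circ p$, where $p \colon E \to E/N$ is the (open) quotient map and $u_0 \colon E/N \to F$ is the induced injective, continuous, linear map. Since $p$ is surjective, $p^{-1}(V)$ is a $0$-neighborhood in $E$ with $u_0(V) = u\bigl(p^{-1}(V)\bigr)$ for every $0$-neighborhood $V$ in $E/N$, so near-openness of $u$ passes to $u_0$. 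By closure under continuous images, $u(E) = u_0(E/N) \in \mathcal{C}$, whence $u_0 \colon E/N \to u(E)$ is a continuous linear bijection between members of $\mathcal{C}$, hence a topological isomorphism by (C). Therefore $u = u_0 \circ p$ is open onto $u(E)$, i.e.\ a topological homomorphism, and $E$ is a Pt\'{a}k space. Thus every member of $\mathcal{C}$ is a barreled Pt\'{a}k space, which together with the first part proves the theorem.

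The step I expect to be the main obstacle is the Pt\'{a}k part of the ``$\subseteq$'' inclusion: it relies on the precise ``iff'' characterization of Pt\'{a}k (and, for the weaker infra-Pt\'{a}k analogue, of infra-Pt\'{a}k) spaces in terms of continuous nearly-open maps being topological homomorphisms, on checking carefully that the induced quotient map $u_0$ really inherits near-openness, and on noting that $u(E)$ genuinely carries an \lcs\ topology to which the closure-under-continuous-images hypothesis applies. The remaining ingredients — the $T_0$/Hausdorff bookkeeping (closedness of $N$, Hausdorffness of $E/N$ and of linear subspaces), the short barreledness argument via Mahowald's theorem, and the verification of the closure properties of the barreled Pt\'{a}k class — are routine consequences of results already in place.
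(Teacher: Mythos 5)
Your proposal is correct and follows essentially the same route as the paper: both directions are argued identically (closure properties plus Thm.\,\ref{Thm:OCG} for one inclusion; Mahowald's theorem for barreledness and the quotient/continuous-image/(C) argument with \cite[IV.8.3, Thm.]{Schaefer:TVS} for the Pt\'{a}k property in the other). The only cosmetic difference is that you verify (G) via Robertson--Robertson where the paper verifies (O) via \cite[IV.8.3, Cor.\,1]{Schaefer:TVS}, which is immaterial given the established equivalence.
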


\begin{proof}
The classes of Banach spaces and of barreled Pt\'{a}k spaces both have the mentioned closure properties: they contain all Banach spaces, are closed under quotients with closed subspaces, are closed under closed graphs (Prop.\,\ref{Prop:ClosedGraphs}), and are closed under continuous images (Prop.\,\ref{Prop:ContinuousImages}). It is well-known that property (O) holds for Banach spaces, and it also holds for barreled Pt\'{a}k spaces by \cite[IV.8.3, Cor.\,1]{Schaefer:TVS}. Consequently, for both of these classes, properties (O), (C) and (G) are equivalent (Thm.\,\ref{Thm:OCG}) and hold.

Let $\mathcal{C}$ be a maximal class of \lcs\,satisfying the assumed closure properties of the theorem. First of all, $\mathcal{C}$ satisfies all properties (O), (C), and (G), because it satisfies OCG-equivalence.

Let $E$ be an arbitrary \lcs\,in $\mathcal{C}$. We want to show that $E$ is barreled. Let $B$ be an arbitrary Banach space. We have $B$ in $\mathcal{C}$. Let $u \colon E \to B$ be an arbitrary linear, closed map. By (G), $u$ is continuous. Then by the theorem of Mahowald, \cite[IV.8.6]{Schaefer:TVS}, $E$ is barreled. 

We want to show that $E$ is a Pt\'{a}k space. Let $F$ be an arbitrary \lcs, and let $u \colon E \to F$ be an arbitrary linear, continuous, and nearly-open map. Subspace $N = u^{-1}(0)$ is closed, because $u$ is continuous. Hence, $E/N$ is in $\mathcal{C}$ by closure under quotients with closed subspaces. The map $u_0 \colon E/N \to F$, associated with $u$, is injective, linear, continuous, and nearly-open. Thus, image $u(E)$ is in $\mathcal{C}$ by closure under continuous images. Applying (C) to bijective and continuous map $u_0 \colon E/N \to u(E)$ yields that $u_0$ is open. Hence, $u_0$ is an isomorphism and thus $u$ a topological homomorphism by \cite[III, 1.2]{Schaefer:TVS}. By \cite[IV.8.3, Thm.]{Schaefer:TVS}, $E$ is a Pt\'{a}k space.

Consequently, every space in $\mathcal{C}$ is a barreled Pt\'{a}k space. Finally, $\mathcal{C}$ must equal the class of barreled Pt\'{a}k spaces by maximality.
\end{proof}

We note the characterization of a barreled Pt\'{a}k space $E$ via its dual $E'$: Here, every subspace $Q$ of $E'$ is $\sigma(E', E)$-closed, whenever $Q \cap A$ is $\sigma(E', E)$-closed in $A$ for every equicontinuous subset $A$ of $E'$, and every $\sigma(E', E)$-bounded subset of $E'$ is equicontinuous.

For a non-empty open subset $\Omega$ of $\mathbb{R}^m$, denote with $\mathcal{D}(\Omega)$ and $\mathcal{D}'(\Omega)$ the spaces of \emph{test functions} and \emph{distributions}, respectively \cite{AIF_1957__7__1_0,AIF_1958__8__1_0,AUG_1947-1948__23__7_0}. Valdivia \cite{AIF_1977__27_4_29_0} showed that these spaces are not even infra-Pt\'{a}k. Hence, they fall out of the above framework. Maybe surprisingly, in sharp contrast, for the \emph{Schwartz space} $\mathcal{S}$ of rapidly-decreasing and infinitely-differentiable functions, and the space of \emph{tempered distributions} $\mathcal{S}'$ the story is different. For a definition of these spaces, see e.g., \cite[III.8]{Schaefer:TVS}.
 
\begin{proposition}[Maybe folklore]
The Schwartz space $\mathcal{S}$ and the space of tempered distributions $\mathcal{S}'$ are both barreled Pt\'{a}k spaces.
\end{proposition}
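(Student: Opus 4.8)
The plan is to treat $\mathcal{S}$ and $\mathcal{S}'$ in turn. For $\mathcal{S}$ everything is classical: $\mathcal{S}$ is a nuclear Fr\'{e}chet space (see \cite[III.8]{Schaefer:TVS}), hence complete and metrizable, so barreled as a Baire space, and, being Fr\'{e}chet, a Pt\'{a}k space by Krein-\u{S}mulian, \cite[IV.6.4, Thm.]{Schaefer:TVS} --- exactly as was invoked for Banach spaces in the proof of Prop.\,\ref{Prop:ContinuousImages}. Being nuclear and Fr\'{e}chet, $\mathcal{S}$ is moreover a Montel space (bounded subsets of a nuclear space are precompact, and $\mathcal{S}$ is complete and barreled), hence reflexive; I record this for use below. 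Thus $\mathcal{S}$ is a barreled Pt\'{a}k space.

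For the strong dual $E := \mathcal{S}'$, I would verify the two conditions in the dual characterization of barreled Pt\'{a}k spaces recorded above, applied with $E' = (\mathcal{S}')' = \mathcal{S}$ (using reflexivity of $\mathcal{S}$) and $\sigma(E',E) = \sigma(\mathcal{S},\mathcal{S}')$ the weak topology of $\mathcal{S}$. The second condition --- every $\sigma(E',E)$-bounded subset of $E'$ is equicontinuous --- says exactly that $\mathcal{S}'$ is barreled, which holds because $\mathcal{S}'$ is the strong dual of the reflexive (indeed Montel) space $\mathcal{S}$, hence itself reflexive (indeed Montel) and thus barreled, \cite{Schaefer:TVS}. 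One checks, directly from the definitions, that the equicontinuous subsets of $E' = \mathcal{S}$ are precisely the bounded subsets of $\mathcal{S}$. The first condition then amounts to the statement that is the crux of the proof: every linear subspace $Q$ of $\mathcal{S}$ with $Q \cap B$ being $\sigma(\mathcal{S},\mathcal{S}')$-closed for every bounded $B \subseteq \mathcal{S}$ is itself $\sigma(\mathcal{S},\mathcal{S}')$-closed.

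To prove this I would use that $\mathcal{S}$ is metrizable. For any compact $K \subseteq \mathcal{S}$ the original and the weak topology of $\mathcal{S}$ agree on $K$ (a continuous bijection of a compact space onto a Hausdorff space is a homeomorphism), and $K$ is bounded, so $Q \cap K$ is $\sigma(\mathcal{S},\mathcal{S}')$-closed, hence closed for the original topology first in $K$ and then in $\mathcal{S}$. Taking $K = \{x_n : n \in \mathbb{N}\} \cup \{x\}$ for a sequence $(x_n)$ in $Q$ with $x_n \to x$ shows $x \in Q$; so $Q$ is sequentially closed for the original topology, hence closed ($\mathcal{S}$ being metrizable), hence weakly closed (a closed linear subspace of a \lcs\ is weakly closed). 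This reduction is the main obstacle, and the crucial point is that it rests on metrizability of $\mathcal{S}$: that is precisely what $\mathcal{D}(\Omega)$ lacks --- it is reflexive and even Montel, but an $(LF)$-space, not metrizable, so the chain ``nearly closed $\Rightarrow$ sequentially closed $\Rightarrow$ closed'' breaks down, consistent with Valdivia's result that $\mathcal{D}'(\Omega)$ is not even infra-Pt\'{a}k. The remaining ingredients (nuclearity and hence reflexivity of $\mathcal{S}$, barreledness of strong duals of reflexive spaces, coincidence of weak and original topologies on compacta, and ``closed $=$ weakly closed'' for subspaces) are standard and contained in \cite{Schaefer:TVS}.
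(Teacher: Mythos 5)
Your proposal is correct, and for the dual $\mathcal{S}'$ it takes a genuinely different route from the paper. For $\mathcal{S}$ itself the two arguments essentially coincide (you get barreledness from the Baire property rather than from Montel $\Rightarrow$ reflexive $\Rightarrow$ barreled, and Pt\'{a}k-ness from Krein--\u{S}mulian in both cases). For $\mathcal{S}'$, however, the paper simply observes that $\mathcal{S}'$ is Montel (hence barreled) and cites \cite[IV.8, Examples]{Schaefer:TVS} --- i.e.\ the classical theorem that the strong dual of a reflexive Fr\'{e}chet space is $B$-complete --- whereas you verify the dual characterization of ``barreled Pt\'{a}k'' recorded in the paper directly: you identify $(\mathcal{S}'_\beta)'$ with $\mathcal{S}$ via reflexivity, check that the equicontinuous subsets of this dual are exactly the bounded subsets of $\mathcal{S}$ (the bipolar of a bounded set being bounded), and then run the chain ``almost weakly closed $\Rightarrow$ closed on compacta $\Rightarrow$ sequentially closed $\Rightarrow$ closed $\Rightarrow$ weakly closed'', using that weak and original topologies agree on compacta, that $\mathcal{S}$ is metrizable, and that closed subspaces are weakly closed. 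Each step checks out (note that reflexive plus metrizable already forces completeness, so no hidden hypothesis is missing), and in effect you have reproved the cited theorem from \cite[IV.8]{Schaefer:TVS} in the case at hand. What your version buys is transparency: it isolates metrizability of $\mathcal{S}$ as the decisive ingredient and thereby explains why the same argument collapses for the non-metrizable space $\mathcal{D}(\Omega)$, consistent with Valdivia's result quoted in the paper; the cost is length compared with the paper's two-line citation.
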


\begin{proof}
As space $\mathcal{S}$ is a Montel space, \cite[IV.5.8]{Schaefer:TVS}, the strong dual $\left(\mathcal{S}', \beta(\mathcal{S}', \mathcal{S})\right)$ is a Montel space, \cite[IV.5.9]{Schaefer:TVS}. As the strong topology $\beta(\mathcal{S}', \mathcal{S})$ coincides with the topology of compact convergence $T_{c}$, $\mathcal{S}'$ is a Montel space. Montel spaces are reflexive (by definition) and thus barreled, \cite[IV.5.6, Thm.]{Schaefer:TVS}. Hence, $\mathcal{S}$ and $\mathcal{S}'$ are barreled.

Space $\mathcal{S}$ is clearly a Frech\'{e}t space, \cite[III.8]{Schaefer:TVS}. Then by \cite[IV.8, Examples]{Schaefer:TVS}, both $\mathcal{S}$ and $\mathcal{S}'$ are Pt\'{a}k spaces.
\end{proof}

In the same vein as above, we prove a characterization theorem for barreled infra-Pt\'{a}k spaces. For more information on infra-Pt\'{a}k spaces, see \cite{AIF_1975__25_2_235_0}. These spaces are more general then barreled Pt\'{a}k spaces. The missing closure under quotients with closed subspaces is exactly the differentiating property.

\begin{theorem}[Barreled infra-Pt\'{a}k Characterization]
The class of barreled infra-Pt\'{a}k spaces is exactly the largest ($T_0$) class of \lcs, which contains all Banach spaces, is closed under closed graphs, is closed under continuous images, and for which an open-mapping theorem (O), a continuous-inverse theorem (C) or a closed-graph theorem (G) holds (and thus all of them).
\end{theorem}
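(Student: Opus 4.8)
\emph{Sketch of a proof.}
The plan is to follow the proof of the Barreled Pt\'{a}k Characterization theorem above essentially line by line, with the single modification that every place where that argument uses closure under quotients with closed subspaces --- the one hypothesis now dropped --- is replaced by a direct appeal to closure under continuous images. This confines us to \emph{bijective} maps, so the open-mapping property (O) for general surjections falls out of reach and is replaced throughout by its bijective shadow (C); correspondingly, the Pt\'{a}k property weakens to the infra-Pt\'{a}k ($B_r$-complete) property, which constrains only bijections. In particular, I would read the parenthetical ``(and thus all of them)'' as ``(and thus both (C) and (G))'', since the ingredient of Thm.\,\ref{Thm:OCG} that promotes (C) to (O), namely closure under quotients, is exactly what is missing here.

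First I would check that the class of barreled infra-Pt\'{a}k spaces --- and likewise the class of Banach spaces --- satisfies the listed hypotheses. It contains all Banach spaces, since a Banach space is a Fr\'{e}chet space, hence a Pt\'{a}k space by the theorem of Krein-\u{S}mulian \cite[IV.6.4, Thm.]{Schaefer:TVS}, a fortiori an infra-Pt\'{a}k space, and is barreled. Closure under closed graphs is Prop.\,\ref{Prop:ClosedGraphs}, and closure under continuous images is Prop.\,\ref{Prop:ContinuousImages}. For the closed-graph property (G): if $u \colon E \to F$ is linear and continuous between barreled infra-Pt\'{a}k spaces, then $\mathrm{Graph}(u)$ is closed by Prop.\,\ref{Prop:ContinuousClosed}; conversely, if $\mathrm{Graph}(u)$ is closed, then $u$ is continuous by the theorem of Robertson-Robertson \cite[IV.8.5, Thm.]{Schaefer:TVS} ($E$ barreled, $F$ $B_r$-complete). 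Since the class is closed under closed graphs, the implication (G) $\Rightarrow$ (C) in the proof of Thm.\,\ref{Thm:OCG}, which involves no quotient, then yields the continuous-inverse property (C) as well; so all four requirements hold.

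Next, let $\mathcal{C}$ be a maximal $(T_0)$ class of \lcs\,with the listed properties. Whichever of (O), (C), (G) is assumed for $\mathcal{C}$, the chain (O) $\Rightarrow$ (C) $\Leftrightarrow$ (G) that one reads off the proof of Thm.\,\ref{Thm:OCG} --- only the last equivalence uses a closure property, namely closure under closed graphs --- shows that $\mathcal{C}$ satisfies both (C) and (G). Now fix $E \in \mathcal{C}$. To see that $E$ is barreled: an arbitrary Banach space $B$ lies in $\mathcal{C}$, and for an arbitrary linear, closed $v \colon E \to B$, property (G) makes $v$ continuous; hence every linear, closed map from $E$ into a Banach space is continuous, so $E$ is barreled by the theorem of Mahowald \cite[IV.8.6]{Schaefer:TVS}. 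To see that $E$ is infra-Pt\'{a}k: let $F$ be an arbitrary \lcs\,and $u \colon E \to F$ an arbitrary continuous, nearly-open linear bijection; by closure under continuous images, $u(E) = F \in \mathcal{C}$, so applying (C) to $u \colon E \to F$ (both now in $\mathcal{C}$) shows that $u^{-1}$ is continuous, i.e.\ that $u$ is a topological isomorphism; by the characterization of infra-Pt\'{a}k spaces in \cite[IV.8.3, Thm.]{Schaefer:TVS}, $E$ is infra-Pt\'{a}k.

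Hence every member of $\mathcal{C}$ is a barreled infra-Pt\'{a}k space; since the class of barreled infra-Pt\'{a}k spaces itself has all the listed properties and now contains $\mathcal{C}$, maximality of $\mathcal{C}$ forces it to coincide with that class, identifying the latter as the unique largest such class. The step I expect to be the main obstacle --- and the reason this result is genuinely weaker than the Pt\'{a}k version --- is the loss of the quotient reduction: for a non-injective continuous surjection $u$ one would like to pass from $E$ to $E/u^{-1}(0)$ in order to turn $u$ into a bijection, but that quotient need not belong to $\mathcal{C}$, so only bijective maps are tractable. This is precisely why the argument produces the infra-Pt\'{a}k property, which constrains only bijections, rather than the full Pt\'{a}k property, and why the open-mapping property (O) for arbitrary surjections cannot be recovered within this weaker framework.
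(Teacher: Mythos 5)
Your handling of the maximal class $\mathcal{C}$ --- deriving (C) and (G) from whichever property is assumed, then barreledness via Mahowald and the infra-Pt\'{a}k property via closure under continuous images together with (C) and \cite[IV.8.3, Thm.]{Schaefer:TVS} --- coincides with the paper's argument, as does your verification of the closure properties and of (G) for barreled infra-Pt\'{a}k spaces via Robertson--Robertson. The genuine gap is your treatment of (O). The theorem asserts that \emph{all} of (O), (C), (G) hold for the class of barreled infra-Pt\'{a}k spaces; you instead reinterpret the parenthetical as ``(and thus both (C) and (G))'' and declare (O) ``out of reach.'' But that is not a proof that (O) fails; it is only the observation that one particular route to it (the quotient reduction inside Thm.~\ref{Thm:OCG}) is blocked. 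The paper closes exactly this hole with a direct argument that never passes through a quotient: for a surjective, linear, continuous $u \colon E \to F$ between barreled infra-Pt\'{a}k spaces, $u$ is nearly open because its range $F$ is barreled \cite[IV.8.2]{Schaefer:TVS}, its graph is closed because $u$ is continuous (Prop.~\ref{Prop:ContinuousClosed}), and Pt\'{a}k's general open-mapping theorem \cite[IV.8.4]{Schaefer:TVS} then yields that $u$ is open. This step is the one genuinely new ingredient of the infra-Pt\'{a}k proof relative to the Pt\'{a}k version, and it is precisely the step you omitted; without it you have proved a strictly weaker statement than the one asserted.

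That said, your instinct does locate the real delicacy: since quotients of $B_r$-complete spaces need not be $B_r$-complete, the bijective reduction is genuinely unavailable, and the whole weight of (O) rests on whether the generalized open-mapping theorem \cite[IV.8.4]{Schaefer:TVS} may be invoked with a domain that is only $B_r$-complete rather than $B$-complete. If you wish to keep your reading, you must either carry out and justify that application as the paper does, or explicitly weaken the theorem's conclusion; as written, your proposal silently does the latter.
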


\begin{proof}
The classes of Banach spaces and of barreled infra-Pt\'{a}k spaces both have the mentioned closure properties: they contain all Banach spaces, are closed under closed graphs (Prop.\,\ref{Prop:ClosedGraphs}), and are closed under continuous images (Prop.\,\ref{Prop:ContinuousImages}). It is well-known that properties (O), (C), and (G) hold for Banach spaces. Property (G) also holds for barreled infra-Pt\'{a}k spaces by \cite[IV.8.5, Thm.]{Schaefer:TVS}. Property (G) implies (C) directly. We need to show (O). For this, let $u \colon E \to F$ be a surjective, linear, and continuous mapping between two barreled infra-Pt\'{a}k spaces $E$ and $F$. As $u$ is a surjective, linear map onto a barreled space, it is nearly open \cite[IV.8.2]{Schaefer:TVS}. As $u$ is continuous and linear, its graph is closed. By Ptak's general open mapping theorem \cite[IV.8.4]{Schaefer:TVS}, $u$ is open. Hence, (O) holds. Consequently, for both of these classes, all properties (O), (C), and (G) hold (and thus are equivalent).

Let $\mathcal{C}$ be a maximal class of \lcs\,satisfying the assumed closure properties of the theorem. First of all, $\mathcal{C}$ always satisfies property (C), because (O) and (G) imply (C) directly. As $\mathcal{C}$ is closed under closed graphs, (G) always holds for $\mathcal{C}$, too.

Let $E$ be an arbitrary \lcs\,in $\mathcal{C}$. We want to show that $E$ is barreled. Let $B$ be an arbitrary Banach space. We have $B$ in $\mathcal{C}$. Let $u \colon E \to B$ be an arbitrary linear, closed map. By (G), $u$ is continuous. Then by the theorem of Mahowald, \cite[IV.8.6]{Schaefer:TVS}, $E$ is barreled. 

We want to show that $E$ is an infra-Pt\'{a}k space. Let $F$ be an arbitrary \lcs, and let $u \colon E \to F$ be an arbitrary injective, linear, continuous, and nearly-open map. Then image $u(E)$ is in $\mathcal{C}$ by closure under continuous images. Applying (C) to bijective and continuous map $u \colon E \to u(E)$ yields that $u$ a topological homomorphism. By \cite[IV.8.3, Thm.]{Schaefer:TVS}, $E$ is an infra-Pt\'{a}k space.

Consequently, every space in $\mathcal{C}$ is a barreled infra-Pt\'{a}k space. Finally, $\mathcal{C}$ must equal the class of barreled infra-Pt\'{a}k spaces by maximality.
\end{proof}

Valdivia \cite{Valdivia1984} was apparently the first, who gave an example of a space, which is infra-Pt\'{a}k but not Pt\'{a}k. Unfortunately, it is unclear, if this space is barreled or not. We give such an example below, showing that the above class of barreled infra-Pt\'{a}k spaces is strictly larger than the class of barreled Pt\'{a}k spaces. Surprisingly, for this we make use of considerations by Husain \cite{Husain:SSpacesOpenMapping}, published twenty years earlier than Valdivia's.

\begin{proposition}
The dual space $\left(\mathbb{R}^{\mathbb{N}}\right)'$ is barreled infra-Pt\'{a}k but not Pt\'{a}k.
\end{proposition}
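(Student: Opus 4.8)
The plan is to make the space concrete first. Taking $(\mathbb{R}^{\mathbb{N}})'$ with its strong (natural) dual topology, the dual of the Fr\'echet space $\mathbb{R}^{\mathbb{N}}$ is the space $\varphi$ of finitely supported real sequences, and since $\mathbb{R}^{\mathbb{N}}$ is Fr\'echet-Montel the strong topology on $\varphi$ coincides with the finest locally convex topology; thus $(\mathbb{R}^{\mathbb{N}})'=\bigoplus_{n\in\mathbb{N}}\mathbb{R}$. It then suffices to show that this space is (a)~barreled, (b)~infra-Pt\'ak, but (c)~not Pt\'ak; by the two characterization theorems above this places it inside the class of the second theorem and outside the class of the first, so the two classes indeed differ.

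For (a): a locally convex direct sum of barreled spaces is barreled \cite{Schaefer:TVS}, and each summand $\mathbb{R}$, being Banach, is barreled; alternatively, the strong dual of the Fr\'echet-Montel space $\mathbb{R}^{\mathbb{N}}$ is Montel \cite{Schaefer:TVS}, hence barreled.

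For (b): I would invoke the considerations of Husain \cite{Husain:SSpacesOpenMapping} on strong duals of metrizable spaces. In terms of the dual description recalled above, and using that $\varphi$ is barreled, the infra-Pt\'ak property amounts to the following: every $\sigma(\varphi',\varphi)$-dense subspace $Q$ of $\varphi'=\mathbb{R}^{\mathbb{N}}$ that meets every equicontinuous set in a $\sigma$-closed set must be all of $\mathbb{R}^{\mathbb{N}}$. Here $\sigma(\varphi',\varphi)$ is the product topology, and --- $\varphi$ being barreled --- the equicontinuous subsets of $\varphi'$ are exactly the compact boxes $\prod_{n}[-a_{n},a_{n}]$, each compact subset of $\mathbb{R}^{\mathbb{N}}$ lying inside one such box. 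Husain's argument applies to precisely this configuration.

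Step (c) is the crux and, I expect, the only genuine obstacle. To show $(\mathbb{R}^{\mathbb{N}})'$ is not Pt\'ak I would, following Husain, exhibit a witness to the failure of $B$-completeness in one of its two equivalent forms: a continuous, linear, nearly-open surjection from $\varphi$ onto a suitably chosen \lcs\ that is not open --- impossible for a Pt\'ak space by the open-mapping property \cite{Schaefer:TVS} --- or, dually, a \emph{non-dense} subspace of $\mathbb{R}^{\mathbb{N}}$ whose intersection with every box $\prod_{n}[-a_{n},a_{n}]$ is closed although the subspace itself fails to be $\sigma(\varphi',\varphi)$-closed. Building this object is the delicate point, and it is exactly here that Husain's work --- published twenty years before Valdivia's example --- is brought to bear; by contrast (a) and (b) are routine verifications against the cited results of Schaefer together with the metrizability of $\mathbb{R}^{\mathbb{N}}$.
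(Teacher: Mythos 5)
Your identification of $(\mathbb{R}^{\mathbb{N}})'$ with $\varphi=\bigoplus_{n}\mathbb{R}$ carrying the finest locally convex topology is correct, and parts (a) and (b) are fine as far as they go (the paper gets barreledness from reflexivity and the infra-Pt\'{a}k property from Husain's Theorem~10 via $S$-spaces with the CP property, so your route through the dual characterization is a legitimate variant). The genuine problem is (c), which you rightly single out as the crux but then leave as an unexecuted construction. The difficulty is not that the construction is delicate; it is that, in the concrete picture you yourself set up, the object you are looking for cannot exist. The dual of $\varphi$ is $\varphi^{*}=\mathbb{R}^{\mathbb{N}}$, the topology $\sigma(\varphi^{*},\varphi)$ is the product topology, and the equicontinuous sets are exactly the subsets of the boxes $\prod_{n}[-a_{n},a_{n}]$ --- all of which you state. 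But the product topology on $\mathbb{R}^{\mathbb{N}}$ is metrizable, hence compactly generated, and every compact subset of $\mathbb{R}^{\mathbb{N}}$ is bounded and therefore contained in a box. So if a subspace $Q\subseteq\mathbb{R}^{\mathbb{N}}$ meets every box in a relatively $\sigma$-closed (hence closed) set, it meets every compact set in a closed set, and is therefore closed. In other words, the configuration you describe in (b) already yields the full Pt\'{a}k ($B$-completeness) criterion of \cite[IV.8.1]{Schaefer:TVS}, not merely the infra-Pt\'{a}k one, and no witness of the kind you propose in (c) --- a non-closed almost-closed subspace of $\mathbb{R}^{\mathbb{N}}$, or equivalently a nearly-open continuous surjection from $\varphi$ that is not open --- can be built.

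This means your proof cannot be completed along the announced lines, and it also puts the negative half of the statement in doubt: the paper's own proof of ``not Pt\'{a}k'' is a bare citation of \cite[Prop.~5]{Husain:SSpacesOpenMapping}, whereas the elementary argument above (closed boxes are cofinal among the compact subsets, and metrizable spaces are $k$-spaces) shows that $\varphi$ with its finest locally convex topology satisfies the dual criterion for $B$-completeness. Before investing further effort you should either exhibit the required subspace explicitly --- which the argument above rules out --- or check precisely what Husain's Proposition~5 asserts and for which topology on the dual it is formulated. As it stands, (c) is the one step of your proposal (and of the paper's proof) that does not survive scrutiny, while your own analysis in (b) quietly proves the opposite of what (c) needs.
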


\begin{proof}
For Pt\'{a}k space $E = \mathbb{R}^{\mathbb{N}}$, its dual $(E', t_{c})$ is reflexive and thus barreled. Here, strong topology $\beta$ and topology of uniform convergence on compact, convex sets $t_{c}$ coincide. It is not Pt\'{a}k \cite[Prop.\,5]{Husain:SSpacesOpenMapping}. As $E$ is a complete and metrizable \lcs\,(Fr\'{e}chet), it is an S-space with CP property \cite[Remark after Thm.\,1 and remark after Def.\,2]{Husain:SSpacesOpenMapping}. Hence, by \cite[Thm.\,10]{Husain:SSpacesOpenMapping} its dual $(E', t_{c})$ is infra-Pt\'{a}k.
\end{proof}

\section*{Acknowledgements}
We would like to thank Professor Olav Kristian Gunnarson Dovland (University of Agder, Norway) for his comments on a previous version of this paper. In addition, we would like to thank Professor Professor Mugnolo (FernUniversit\"{a}t Hagen, Germany) for his support. Finally, we would like to encourage the reader to give us feedback. Any help is appreciated very much!  

\providecommand{\bysame}{\leavevmode\hbox to3em{\hrulefill}\thinspace}
\providecommand{\MR}{\relax\ifhmode\unskip\space\fi MR }
\providecommand{\MRhref}[2]{%
  \href{http://www.ams.org/mathscinet-getitem?mr=#1}{#2}
}
\providecommand{\href}[2]{#2}


\begin{thebibliography}{Hus64b}

\bibitem[Ada70]{ADASCH1970:Tonneliert}
Norbert Adasch, \emph{{Tonnelierte R\"{a}ume und zwei S\"{a}tze von Banach}},
  Mathematische Annalen \textbf{186} (1970), 209--214 (ger).

\bibitem[Ada83]{Adasch1983}
\bysame, \emph{{Ein optimaler Satz \"{u}ber offene Abbildungen}}, Mathematische
  Annalen \textbf{265} (1983), 113--114.

\bibitem[Ada86]{Adasch1986}
\bysame, \emph{{Der Satz \"{u}ber offene Abbildungen in topologischen
  Vektorr\"{a}umen}}, Mathematische Zeitschrift \textbf{191} (1986), 645--648.

\bibitem[Alt06]{Alt:LineareFunktionalanalysis}
Hans~Wilhelm Alt, \emph{{Lineare Funktionalanalysis}}, 5th ed., Springer,
  Berlin, Heidelberg, New York, 2006.

\bibitem[AV05]{AppellVaeth:Funktionalanalysis}
J{\"{u}}rgen Appell and Martin V{\"{a}}th, \emph{{Elemente der
  Funktionalanalysis}}, 1st ed., Vieweg \& Sohn Verlag, Wiesbaden, 2005.

\bibitem[Bou89]{Bourbaki:GeneralTop:5-10}
Nicolas Bourbaki, \emph{{General Topology (Chapters 5--10)}}, 1st ed.,
  Springer, Berlin, Heidelberg, 1989.

\bibitem[Bou95]{Bourbaki:GeneralTop:1-4}
\bysame, \emph{{General Topology (Chapters 1--4)}}, 1st ed., Springer, Berlin,
  Heidelberg, 1995.

\bibitem[HM62]{HusainMahowald:BarreledSpacesOpenMapping}
Taqdir Husain and Mark Mahowald, \emph{Barrelled spaces and the open mapping
  theorem}, Proc. Amer. Math. Soc. \textbf{13} (1962), 423--424.

\bibitem[Hol77]{Hollstein1977}
Ralf Hollstein, \emph{{\"{U}ber die Tonneliertheit von lokalkonvexen
  Tensorprodukten}}, Manuscripta mathematica \textbf{22} (1977), 7--12.

\bibitem[Hus62]{Husain:SSpacesOpenMapping}
Taqdir Husain, \emph{S-spaces and the open mapping theorem}, Pacific Journal of
  Mathematics \textbf{12} (1962), no.~1, 41--74.

\bibitem[Hus64a]{Husain1964:I}
\bysame, \emph{{B(J)-Spaces and the Closed Graph Theorem}}, Mathematische
  Annalen \textbf{153} (1964), 293--298.

\bibitem[Hus64b]{Husain1964:II}
\bysame, \emph{{B(T)-Spaces and the Closed Graph Theorem II}}, Mathematische
  Annalen \textbf{156} (1964), 334--336.

\bibitem[Kri71]{Krishnasamy:PhDThesis}
Vasagamoorthi Krishnasamy, \emph{{On the Closed Graph and Open Mapping
  Theorems}}, Ph.D. thesis, McMaster University, November 1971.

\bibitem[Mat98]{Mathieu:Funktionalanalysis}
Martin Mathieu, \emph{Funktionalanalysis}, Spektrum Akademischer Verlag,
  Berlin, Heidelberg, 1998.

\bibitem[PC87]{PerezCarreras1987}
Pedro {P\'{e}}rez Carreras, \emph{Some aspects of the theory of barreled
  spaces}, \u{C}asopis pro p\u{e}stov{\'{a}}n\'{i} matematiky \textbf{112}
  (1987), no.~2, 123--161 (eng).

\bibitem[Pt{\'{a}}58]{Ptak1958:Completeness}
Vlastimil Pt{\'{a}}k, \emph{Completeness and the open mapping theorem},
  Bulletin de la Soci\'{e}t\'{e} Math\'{e}matique de France \textbf{86} (1958),
  41--74 (eng).

\bibitem[Pt{\'{a}}59]{Ptak1959}
\bysame, \emph{On the closed graph theorem}, Czechoslovak Mathematical Journal
  \textbf{09} (1959), no.~4, 523--527 (eng).

\bibitem[Pt{\'{a}}60]{Ptak1960}
\bysame, \emph{{\"{U}}ber einen {T}yp von {S}{\"{a}}tzen {\"{u}}ber
  abgeschlossene {A}bbildungen}, Commentationes Mathematicae Universitatis
  Carolinae \textbf{001} (1960), no.~4, 3--7 (ger).

\bibitem[Pt{\'{a}}62]{Ptak1962}
\bysame, \emph{The principle of uniform boundedness and the closed graph
  theorem}, Czechoslovak Mathematical Journal \textbf{12} (1962), no.~4,
  523--528 (eng).

\bibitem[Pt{\'{a}}65]{Ptak1965}
\bysame, \emph{{Some Open Mapping Theorems in LF-Spaces and their Application
  to Existence Theorems for Convolution Equations}}, Mathematica Scandinavica
  \textbf{16} (1965), 75--93.

\bibitem[Pt{\'{a}}66]{Ptak1966}
\bysame, \emph{{Some Metric Aspects of the Open Mapping and Closed Graph
  Theorems}}, Mathematische Annalen \textbf{163} (1966), 95--104.

\bibitem[Pt{\'{a}}69]{Ptak1969}
\bysame, \emph{{Openness of linear mappings in $LF$-spaces}}, Czechoslovak
  Mathematical Journal \textbf{19} (1969), no.~3, 547--552 (eng).

\bibitem[Pt{\'{a}}74]{Ptak1974}
\bysame, \emph{{A Theorem of the Closed Graph Type}}, Manuscripta mathematica
  \textbf{13} (1974), 109--130.

\bibitem[Rod91]{Rodrigues1991}
Brian Rodrigues, \emph{Some new classes of topological vector spaces with
  closed graph theorems}, Commentationes Mathematicae Universitatis Carolinae
  \textbf{32} (1991), no.~2, 287--296 (eng).

\bibitem[Sax74]{Saxon1974}
Stephen~A. Saxon, \emph{{Some Normed Barrelled Spaces which are not Baire}},
  Mathematische Annalen \textbf{209} (1974), 153--160.

\bibitem[Sch48]{AUG_1947-1948__23__7_0}
Laurent Schwartz, \emph{{T}h\'{e}orie des distributions et transformation de
  {F}ourier}, Annales de l'universit\'e de Grenoble \textbf{23} (1947-1948),
  7--24 (fr). \MR{25615}

\bibitem[Sch57]{AIF_1957__7__1_0}
\bysame, \emph{{T}h\'{e}orie des distributions \'{a} valeurs vectorielles.
  {I}}, Annales de l'Institut Fourier \textbf{7} (1957), 1--141 (fr). \MR{21
  \#6534}

\bibitem[Sch58]{AIF_1958__8__1_0}
\bysame, \emph{{T}h\'{e}orie des distributions \'{a} valeurs vectorielles.
  {II}}, Annales de l'Institut Fourier \textbf{8} (1958), 1--209 (fr). \MR{22
  \#8322}

\bibitem[Sch71]{Schaefer:TVS}
Helmut~H. Schaefer, \emph{{Topological Vector Spaces}}, 2nd ed., Graduate Texts
  in Mathematics, no.~3, Springer, New York, 1971.

\bibitem[Sim89]{Simons1989}
Rodrigues~B. Simons, S., \emph{The open mapping and closed range theorems},
  Mathematische Annalen \textbf{283} (1989), no.~1, 87--96.

\bibitem[Val78]{Valdivia1977/78}
Manuel Valdivia, \emph{{On the Closed Graph Theorem in Topological Spaces}},
  Manuscripta mathematica \textbf{23} (1977--78), 173--184.

\bibitem[Val71a]{AIF_1971__21_2_3_0}
\bysame, \emph{Absolutely convex sets in barrelled spaces}, Annales de
  l'Institut Fourier \textbf{21} (1971), no.~2, 3--13 (en). \MR{48 \#11968}

\bibitem[Val71b]{AIF_1971__21_2_1_0}
\bysame, \emph{A hereditary property in locally convex spaces}, Annales de
  l'Institut Fourier \textbf{21} (1971), no.~2, 1--2 (en). \MR{48 \#11967}

\bibitem[Val72a]{AIF_1972__22_2_27_0}
\bysame, \emph{On nonbornological barrelled spaces}, Annales de l'Institut
  Fourier \textbf{22} (1972), no.~2, 27--30 (en). \MR{49 \#1050}

\bibitem[Val72b]{AIF_1972__22_2_21_0}
\bysame, \emph{Some examples on quasi-barrelled spaces}, Annales de l'Institut
  Fourier \textbf{22} (1972), no.~2, 21--26 (en). \MR{49 \#1053}

\bibitem[Val73]{Valdivia1973}
\bysame, \emph{On weak compactness}, Studia Mathematica \textbf{49} (1973),
  no.~1, 35--40 (eng).

\bibitem[Val75]{AIF_1975__25_2_235_0}
\bysame, \emph{On {$B\_r$}-completeness}, Annales de l'Institut Fourier
  \textbf{25} (1975), no.~2, 235--248 (en). \MR{53 \#3634}

\bibitem[Val77]{AIF_1977__27_4_29_0}
\bysame, \emph{The space {$D(U)$} is not {$B\_r$}-complete}, Annales de
  l'Institut Fourier \textbf{27} (1977), no.~4, 29--43 (en). \MR{57 \#17182}

\bibitem[Val79]{AIF_1979__29_3_39_0}
\bysame, \emph{On certain barrelled normed spaces}, Annales de l'Institut
  Fourier \textbf{29} (1979), no.~3, 39--56 (en). \MR{81d:46006}

\bibitem[Val81]{Valdivia1981}
Carreras Pedro~P\'{e}rez Valdivia, Manuel, \emph{{On Totally Barreled Spaces}},
  Mathematische Zeitschrift \textbf{178} (1981), 263--270.

\bibitem[Val84]{Valdivia1984}
Manuel Valdivia, \emph{{$B\_r$-Complete Spaces which are not $B$-Complete}},
  Mathematische Zeitschrift \textbf{185} (1984), 253--260.

\bibitem[VD72]{CM_1972__24_2_227_0}
D.~Van~Dulst, \emph{Barreledness of subspaces of countable codimension and the
  closed graph theorem}, Compositio Mathematica \textbf{24} (1972), no.~2,
  227--234 (en). \MR{320683}

\bibitem[vQ01]{Querenburg:MengenTopologie}
Boto von Querenburg, \emph{{Mengentheoretische Topologie}}, 3rd ed., Springer
  Verlag, Berlin, Heidelberg, 2001.

\bibitem[Wer97]{Werner:Funktionalanalysis}
Dirk Werner, \emph{Funktionalanalysis}, 2nd ed., Springer, Berlin, Heidelberg,
  New York, 1997.

\end{thebibliography}
\end{document}